\title{Proof of a conjecture of Sturmfels, \linebreak Timme and Zwiernik}
\author{Laurent Manivel}
\date{\today}
\theoremstyle{plain}
\newtheorem{theorem}{Theorem}
\newtheorem*{conjecture*}{Conjecture}
\newtheorem{definition}[theorem]{Definition}
\newtheorem{prop}[theorem]{Proposition}
\newtheorem{lemma}[theorem]{Lemma}
\def\CC{{\mathbb{C}}}
\def\PP{{\mathbb{P}}}
\def\QQ{{\mathbb{Q}}}
\def\SS{{\mathbb{S}}}
\def\cC{{\mathcal{C}}}
\def\cL{{\mathcal{L}}}
\def\cM{{\mathcal{M}}}
\def\cQ{{\mathcal{Q}}}
\def\cU{{\mathcal{U}}}
\begin{document}

\begin{abstract}
We prove a conjecture of Sturmfels, Timme and Zwiernik on the ML-degrees of linear covariance models in 
algebraic statistics. As in our previous works on 
linear concentration models, the proof ultimately relies 
on the computation of certain intersection numbers on the varieties of complete quadrics. 
\end{abstract}

\maketitle

 \section{Introduction}
In algebraic statistics, there is a notion of maximum likelihood estimation whose complexity is governed by a fundamental
invariant called maximum likelihood degree or {\it ML-degree} (see e.g. \cite{icm, uhler, huh} for an introduction). This 
degree depends very much on the statistical model; here we consider the so-called linear covariance models, whose
ML-degree defines a rather mysterious invariant of a space of symmetric matrices. These models are very different from 
the linear concentration models considered in \cite{mmmsv}, for which we obtained quite explicit polynomial formulas for the generic ML-degrees, proving conjectures of Sturmfels-Uhler and Nie-Ranestad-Sturmfels.  In this note we use closely 
related techniques to compute, in small dimension, the generic ML-degree of a linear covariance model. This degree 
can be defined very explicitely as 
follows \cite[Proposition 3.1]{stz}.

\smallskip
Let us denote by $\SS_n$ the vector space of complex symmetric matrices of size $n$. It is endowed with the standard scalar product 
$\langle A, B\rangle= Tr(AB)$. 

\begin{definition}
Consider a subspace $\cL\subset \SS_n$, of dimension $m$. The ML-degree of $\cL$ is the number of solutions,
for $S$ a generic symmetric matrix, of the following system of linear and quadratic equations in $K$ and $\Sigma$:
\begin{equation}\label{ML-degree}\Sigma\in\cL, \quad   K\Sigma =I_n, \quad   KSK-K\in\cL^\perp.\end{equation}
\end{definition} 

We will compute the ML-degree $ML_m$ for $\cL$ {\it generic} of dimension 
$m$ up to four. 

\begin{theorem}
For $m\le 4$, the ML-degree $ML_m$ of a generic linear concentration model of dimension $m$ is:
\begin{eqnarray}
ML_2 &= & 2n-3, \\
ML_3 &= & 3n^2-9n+ 7, \\
ML_4 &= & \frac{11}{3}n^3-18n^2+ \frac{85}{3}n-15.
\end{eqnarray}
\end{theorem}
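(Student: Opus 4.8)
The strategy follows that of \cite{mmmsv}: reduce the enumerative problem to intersection numbers on the variety of complete quadrics, evaluate the ones that occur, and simplify.

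\smallskip
\emph{Step 1: reformulation on the reciprocal variety.} Writing $K=\Sigma^{-1}$, the system (\ref{ML-degree}) says exactly that $K$ is a critical point of the Gaussian log-likelihood $\ell_S(K)=\log\det K-\langle S,K\rangle$ restricted to the reciprocal variety $\cL^{-1}:=\overline{\{\Sigma^{-1}:\Sigma\in\cL\}}\subset\PP(\SS_n)$ (which is in general a singular variety). Indeed $d(\Sigma^{-1})=-K(d\Sigma)K$, so $T_K\cL^{-1}=K\cL K$, and the normality condition $\nabla\ell_S(K)=\Sigma-S\in(K\cL K)^{\perp}$ is equivalent to $K\Sigma K-KSK=K-KSK\in\cL^{\perp}$. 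Hence $ML_m$ is the generic ML-degree of the projective variety $\cL^{-1}$ for the Gaussian model.

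\smallskip
\emph{Step 2: resolution by complete quadrics.} For $\cL$ generic the rational parametrization $\PP(\cL)\dashrightarrow\PP(\SS_n^{\vee})$, $[\Sigma]\mapsto[\mathrm{adj}\,\Sigma]$, of $\cL^{-1}$ is resolved on the strict transform $X$ of $\PP(\cL)$ inside the variety of complete quadrics $\mathcal{X}_n$. On $X$ one has the two morphisms remembering the quadric and the dual quadric, the tautological divisors, and the exceptional divisors $E_1,\dots,E_{n-1}$, all governed by the known presentation of $A^*(\mathcal{X}_n)$. The general theory of ML-degrees for Gaussian models (a Gauss-map/conormal computation, with inclusion--exclusion over a normal crossings compactification) writes the ML-degree of $\cL^{-1}$ as an alternating sum of intersection numbers, each pulling back to a degree-$(m-1)$ monomial in the tautological and exceptional classes of $\mathcal{X}_n$ restricted to $X$. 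The quadratic term $KSK$ enters through the square of the dual-quadric class, twisted along the $E_i$; on the first boundary stratum the dual quadric has rank one, so $\mathrm{adj}(\Sigma)\,S\,\mathrm{adj}(\Sigma)$ is proportional to $\mathrm{adj}(\Sigma)$, which pins down these twists.

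\smallskip
\emph{Step 3: evaluation.} Each monomial appearing in Step 2, evaluated against $[X]$, is a universal characteristic number of complete quadrics; for $m\le 4$ only finitely many low-degree ones occur. They are computed from the fibration structure of $\mathcal{X}_n$ (De Concini--Procesi), equivalently from the volume polynomial of $\mathcal{X}_n$ already exploited in \cite{mmmsv}, and are polynomials in $n$. Substituting, collecting terms and simplifying produces $ML_m$ as a polynomial in $n$ of degree $m-1$; the trivial case $ML_1=1$ and the numerical values tabulated in \cite{stz} for small $n$ provide checks.

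\smallskip
\emph{Main obstacle.} The crux is Step 2: turning the ML-degree of the (in general singular) reciprocal variety $\cL^{-1}$ into an honest intersection computation on its complete-quadric resolution, keeping careful track of (i) the contribution of the boundary $\{\det\Sigma=0\}$ — in particular the locus where $\mathrm{rank}\,\Sigma\le n-2$ (empty for generic $\cL$ when $m\le 3$, a finite set when $m=4$), along which $\mathrm{adj}\,\Sigma$ vanishes identically — and (ii) the precise twist by the divisors $E_i$ caused by the quadratic term $KSK$. Getting these corrections right is what collapses the naive Bézout count to the true value $ML_m$.
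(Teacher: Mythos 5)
Your outline points in the right general direction (complete quadrics, boundary corrections along the exceptional divisors), and your Step 1 reformulation as critical points of $\log\det K-\langle S,K\rangle$ on the reciprocal variety is correct. But the proposal has a genuine gap: everything that actually produces the numbers is deferred to an unnamed ``general theory of ML-degrees for Gaussian models'' that writes the answer as ``an alternating sum of intersection numbers.'' No such off-the-shelf machinery exists for the singular reciprocal variety $\cL^{-1}$ with its boundary strata; constructing that inclusion--exclusion is the entire content of the proof, and you acknowledge as much in your ``Main obstacle'' paragraph without resolving it. Concretely, four things are missing. (i) A device to remove the generic matrix $S$ from the critical equation: the substitution $K\mapsto KS$, $\Sigma\mapsto S^{-1}\Sigma$ turns $KSK-K\in\cL^\perp$ into $K^2-K\in\cL^\perp$, so that the locus becomes the degeneracy locus of an explicit bundle map $\cO(-H_1)\oplus\cO(-2H_1)\to\cO\otimes\SS_n/\cM$ on $CQ_n$; without this (or an equivalent) you have no class to compute. (ii) A transversality statement: since $\cL$ appears both in the bundle map and in the linear section $q^{-1}\PP(\cL)$, genericity of the intersection is not automatic and must be checked at an explicit point (e.g. $K_0=I_n$, where it reduces to $\cL\cap\cL^\perp=0$). (iii) The $E_1$ analysis: the degeneracy locus always contains $E_1$, so one must twist the second component by $\mathrm{tr}(K)$ to factor it through $\cO(-H_2)=\cO(-2H_1+E_1)$, and then prove by a dimension count on an incidence variety that the twisted locus meets $q^{-1}\PP(\cL)$ away from $E_1$; you gesture at the twist (``pins down these twists'') but never establish that $E_1$ contributes $0$. (iv) The actual evaluation: the correction $-\delta_n$ with $\delta_n=\binom{n+1}{3}$ in $\int_{CQ_n}H_1^3H_{n-1}^{N-3}=(n-1)^3-\delta_n$ coming from the base locus of the adjugate map, and the separate contribution $\Delta_4^{(2)}=\delta_n$ of $E_2$ (the $\delta_n$ points of $\PP(\cL)\cap D_{n-2}$ for $m=4$), which together account for the $-2\binom{n+1}{3}$ that separates $ML_4$ from the naive count $\sum_a(n-1)^a(n-2)^{3-a}$. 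As written, the proposal never exhibits these terms, so it cannot arrive at the stated formulas.
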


Statement (1) was proved in \cite{cmr}. Statements (2) and (3) were conjectured in \cite[Conjecture 4.2]{stz}. 

\smallskip
As in \cite{mmmsv} we will reduce the problem of computing $ML_m$ to a computation on the space of complete quadrics. There are two main difficulties. 
First, one has to understand the contributions of the exceptional divisors, dominating the loci of symmetric matrices 
of a given rank; up to dimension $m=4$, only corank one and two need to be taken into account, which strongly 
simplifies the problem. Second, we need to make an intersection product in a situation which is not quite generic, 
and we need to be cautious about the transversality conditions that are required to make the computation meaningful. 
Let us start with a brief reminder on complete quadrics. 

\section{Complete quadrics} 
The space of complete quadrics $CQ_n$ is a  compactification of the space of invertible symmetric matrices 
(up to scalar) $\PP(\SS_n)^\circ \subset \PP(\SS_n)$, or equivalently, of smooth quadrics in $\PP^{n-1}$, which 
is well-suited for enumerative geometry. Denote by $D_i\subset \PP(\SS_n)$ the degeneracy locus consisting 
of matrices of rank at most $i$, 
an irreducible variety whose singular locus is $D_{i-1}$. Note that by mapping  a rank $i$ matrix to its image, 
we get a morphism from $D_i-D_{i-1}$ to the Grassmannian $Gr(i,n)$, which is an open subset of the vector bundle
$S^2\cU$ for $\cU$ the tautological vector bundle of rank $i$. We will also denote by $\cQ$ the tautological quotient 
bundle of rank $n-i$ on  $Gr(i,n)$. 

\begin{definition}\label{def:CQ2}
The space of complete quadrics $CQ_n$ is the successive blow-up of $\PP(\SS_n)$ along the degeneracy loci:
$$
CQ_n = Bl_{\widetilde D_{n-2}} \ldots Bl_{\widetilde D_{2}}Bl_{D_1} \PP(\SS_n),
$$
where $\widetilde D_{i} $ is the (smooth) proper transform of $D_i$
by the previous blow-ups.
\end{definition}

See \cite{mmmsv} and references therein for more details and other equivalent definitions. We will denote by 
$N=\binom{n+1}{2}-1$ the dimension of $CQ_n$ and of $\PP(\SS_n)$. 

Note that by construction, $CQ_n$ admits a natural basis (over $\QQ$) of divisors $E_1,\ldots,E_{n-1}$ consisting of 
the strict transforms of the exceptional divisors of the successive blow-ups, plus $E_{n-1}$ which is the 
proper transform of the determinant hypersurface of singular matrices. These divisors are smooth and meet 
transversally in the variety of complete quadrics. They give to $CQ_n$ a kind of Russian doll structure since one can show that 
$$E_i\simeq CQ_i(\cU)\times_{G(i,n)} CQ_{n-i}(\cQ^\vee).$$
One of the most useful 
properties of $C_n$ is that is factorizes the inversion morphism $\iota$ for symmetric matrices:
there is a commutative diagram 
\begin{equation*}
\xymatrix{
 & CQ_n \ar[ld]_{p}\ar[rd]^{q} & \\ 
\PP(\SS_n) \ar@{.>}[rr]^{\iota} & & \PP(\SS_n),}
 \end{equation*}
 where $p$ is the cascade of blow-ups that defines $CQ_n$, while $q$ is obtained by contracting the exceptional
 divisors in reverse order, first  $E_2$ to $D_{n-2}$, 
 and so on up to $E_{n-1}$ contracting to $D_1$ (note that the comatrix of a matrix of corank one is a rank one matrix). 
 In particular $p$ and $q$ identify to $\PP(\SS_n)^\circ$ the complement $CQ_n^\circ$ of the union of the 
 exceptional divisors in $CQ_n$. 
 Of course this diagram is compatible with the relative product structure of each of the exceptional divisors $E_i$,
 that admit a natural contraction map to 
 $$F_i\simeq \PP(S^2\cU)\times_{G(i,n)} \PP(S^2\cQ^\vee).$$
 
 Finally, we will need to know that the pull-backs $H_1$ and $H_{n-1}$ of the hyperplane divisors on $\PP(\SS_n)$ 
 by $p$ and $q$ are  given in 
 terms of the exceptional divisors by the formulas
 $$nH_1 = (n-1)E_1 +(n-2)E_2 + \cdots + E_{n-1},$$
 $$nH_{n-1} = E_1 +2E_2 + \cdots + (n-1)E_{n-1}.$$
 In fact they are part of an alternative $\QQ$-basis $(H_1,H_2,
 \ldots , H_{n-1})$ of the Picard group of $CQ_n$, where $H_k$ is obtained by pulling-back the hyperplane divisor by the morphism defined by the $k\times k$ minors of the generic matrix (or the $(n-k)\times (n-k)$ minors of the inverse matrix). 
 
Now we will try to reduce the computation of the generic ML-degree to an enumerative problem on the variety of complete quadrics. 

\section{Degeneracy locus interpretation}
Recall that our main goal is to count the number of solutions of the system of equations (\ref{ML-degree}). 
We can replace  $K$ by $KS$ and $\Sigma$ by $S^{-1}\Sigma$ , and get rid of $S$ by noticing that $\cL^\perp S=(S^{-1}\cL)^\perp$. So our problem is  equivalent to counting the number of solutions, for $\cL$ generic, of the system 
\begin{equation}\label{ML-degree-bis}
\Sigma\in \cL, \quad   K\Sigma =I_n, \quad   K^2-K\in\cL^\perp.
\end{equation}
After homogeneizing, the last condition simply means that we want $[K]\in\PP(\SS_n)^\circ$ to be such that 
$K^2$ is proportional to $K$ modulo $\cM:=\cL^\perp$. This makes sense on $CQ_n$ and we would be tempted to try 
to count the number of points  $Q\in CQ_n$ verifying the previous conditions for $[K]=p(Q)$ and $[\Sigma]=q(Q)$. 
The proportionality condition may be interpreted as a degeneracy condition for the morphism between vector 
bundles on $CQ_n$ given by 
$$\phi_\cM: \mathcal{O}_{CQ_n}(-L_1)\oplus \mathcal{O}_{CQ_n}(-2L_1) \stackrel{(K,K^2)}{\longrightarrow} 
\mathcal{O}_{CQ_n}\otimes \SS_n/\mathcal{M}.$$
Let $D(\phi_\cM)$ denote the degeneracy locus in $CQ_n$, where this morphism is not injective. By definition 
$$ML_m = \# \Big( D(\phi_\cM)\cap q^{-1}\PP(\cL)^\circ\Big),$$
where the exponent means that we restrict to invertible matrices in $\cL$.

Recall that the expected codimension of the degeneracy locus of a morphism between vector bundles is the difference
between their ranks plus one. Here this gives $m-2+1=m-1$, which is the dimension of $\PP(\cL)$. So we expect,
if everything goes fine, 
a finite number of intersection points. Moreover these points  should be smooth points, which means that the intersection
of $D(\phi_\cM)$ and $q^{-1}\PP(\cL)$ should be transverse at every such point. This would necessarily be the case,
by general arguments, if we could replace $\PP(\cL)$ by some $\PP(\cL')$ (of the same dimension) that 
could be chosen generically. But here $\cL$ is directly involved in the definition of $\phi_\cM$, 
so we have to be careful. We prove the following statement.

\begin{lemma}
For $\cL$ generic, the intersection of  $D(\phi_\cM)$ with $q^{-1}\PP(\cL)^\circ$ is 
everywhere transverse. 
\end{lemma}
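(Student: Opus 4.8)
The plan is to show transversality generically by a dimension count on an incidence variety, exploiting the fact that the only parameter in play is $\cM=\cL^\perp$, which varies in a Grassmannian. First I would set up the universal picture: let $\cM$ range over the Grassmannian $G$ of codimension-$m$ subspaces of $\SS_n$ (equivalently $\cL$ over $G(m,\SS_n)$), and form the universal degeneracy locus $\mathcal{D}\subset CQ_n\times G$ whose fiber over $\cM$ is $D(\phi_\cM)$, together with the universal restriction $\mathcal{R}\subset CQ_n\times G$ whose fiber over $\cM$ is $q^{-1}\PP(\cL)^\circ$. The statement to prove is that for generic $\cM$ the scheme-theoretic intersection $\mathcal{D}_\cM\cap\mathcal{R}_\cM$ is transverse, i.e. reduced of the expected dimension zero. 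By the usual generic-smoothness/Bertini argument over the base $G$, it suffices to prove that the total incidence variety $\mathcal{I}:=\mathcal{D}\cap\mathcal{R}$ is smooth of the expected dimension along the locus lying over $CQ_n^\circ$ (where $p$ and $q$ are isomorphisms onto $\PP(\SS_n)^\circ$), and then invoke generic smoothness of the projection $\mathcal{I}\to G$; transversality of the fibers at a smooth point of $\mathcal{I}$ follows because the fiber dimension is the difference of the dimensions.

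The heart of the argument is therefore the smoothness of $\mathcal{I}$ over $CQ_n^\circ$, which I would check pointwise. Fix a point $Q\in CQ_n^\circ$, so $p(Q)=q(Q)^{-1}=[K]$ for an invertible symmetric $K$, normalized so that $\Sigma=q(Q)$ satisfies $K\Sigma=I_n$. A point $(Q,\cM)\in\mathcal{I}$ means $\Sigma\in\cL=\cM^\perp$ and $K^2-\lambda K\in\cM$ for some scalar $\lambda$, i.e. the pencil $\langle K,K^2\rangle$ meets $\cM$ nontrivially. I would compute the Zariski tangent space to $\mathcal{I}$ at such a point by differentiating both conditions: a tangent vector is a pair $(\dot K,\dot\cM)$ — with $\dot K\in\SS_n$ a tangent direction at $[K]\in\PP(\SS_n)^\circ$ (well-defined since $Q\in CQ_n^\circ$ and $p$ is a local isomorphism there) and $\dot\cM$ a tangent vector to $G$ at $\cM$, i.e. an element of $\mathrm{Hom}(\cM,\SS_n/\cM)$ — subject to the linearizations of "$\Sigma\in\cL$" and "$K^2-\lambda K\in\cM$". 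The key point is that varying $\cM$ alone (with $\dot K=0$) already moves both conditions independently enough: the map sending $\dot\cM$ to the induced normal components of $\Sigma$ and of $K^2-\lambda K$ in $\SS_n/\cM\oplus\SS_n/\cM$ is surjective precisely because, for generic $\cM$, the two vectors $\Sigma$ and $K^2-\lambda K$ span a $2$-dimensional subspace transverse to $\cM$ (here one uses that $K$ is invertible, so $\Sigma=K^{-1}$ and $K^2-\lambda K$ are not proportional unless $K$ satisfies a very special quadratic relation, a codimension-$\ge 1$ condition avoided generically). Counting dimensions then shows $T_{(Q,\cM)}\mathcal{I}$ has exactly the expected dimension $\dim G$, so $\mathcal{I}$ is smooth there and the projection to $G$ is dominant with smooth generic fiber of dimension zero.

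The main obstacle I anticipate is the pointwise tangent-space computation, and in particular making precise that no \emph{hidden} degeneracy forces the tangent space to jump: one must verify that for $Q$ outside a proper closed subset of $CQ_n^\circ$ the matrices $\Sigma=K^{-1}$ and $K^2-\lambda K$ together with the linear conditions cut out a transverse intersection, i.e. that the obstruction space vanishes. Concretely this reduces to checking that a certain linear map $\SS_n\oplus\mathrm{Hom}(\cM,\SS_n/\cM)\to \SS_n/\cM\oplus\SS_n/\cM$ — the total differential of the two defining conditions — is surjective for generic $(Q,\cM)$, which in turn follows from the freedom in $\dot\cM$ described above together with a short rank argument using invertibility of $K$. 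Once this surjectivity is established for a nonempty (hence dense) open subset, generic smoothness of $\mathcal{I}\to G$ gives the Lemma, since the intersection in question takes place entirely inside $q^{-1}\PP(\cL)^\circ\subset CQ_n^\circ$.
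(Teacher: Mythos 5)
Your overall framework---form a universal incidence variety $\mathcal{I}\subset CQ_n^\circ\times G$ over the Grassmannian of models and deduce transversality of the generic fibre from smoothness of $\mathcal{I}$ in the expected dimension---is essentially the same skeleton the paper invokes under the name ``general irreducibility argument.'' The real content in either version is the tangent-space computation, and that is where your proposal has a genuine gap. You justify surjectivity of the differential by asserting that, for generic $\cM$, ``the two vectors $\Sigma$ and $K^2-\lambda K$ span a $2$-dimensional subspace transverse to $\cM$.'' But at \emph{any} point of $\mathcal{I}$ one has $K^2-\lambda K\in\cM$ by the very definition of the degeneracy locus (and $\Sigma\in\cM^\perp$, not near $\cM$ at all), so this transversality is impossible and cannot be the mechanism. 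Relatedly, your target $\SS_n/\cM\oplus\SS_n/\cM$ has dimension $2m$, whereas the normal space you must surject onto has dimension $N=(N+1-m)+(m-1)$: the condition $\Sigma\in\cM^\perp$ contributes $N+1-m$ equations (valued in $\cM^*$), and the proportionality condition contributes $m-1$ (valued in $(\SS_n/\cM)/\langle \overline{K}\rangle$). A correct version of your computation does exist, but for different reasons: the $\dot\cM$-derivative of ``$K^2-\lambda K\in\cM$'' sees only $\dot\cM(u)$ for the single vector $u=K^2-\lambda K\in\cM$, the $\dot\cM$-derivative of ``$\Sigma\perp\cM$'' is $v\mapsto\langle\Sigma,\dot\cM(v)\rangle$, and joint surjectivity requires reconciling the two on the line $\CC u$, which works because $\langle\Sigma,K\rangle=\mathrm{tr}(K^{-1}K)=n\neq0$---and it fails outright when $u=0$, i.e.\ when $K$ is a multiple of $I_n$, a locus you do not exclude (one must note separately that $[I_n]\notin q^{-1}\PP(\cL)$ for generic $\cL$). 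You would also still need to control the components and singular locus of $\mathcal{I}$ to legitimately pass from generic smoothness to transversality of \emph{every} point of the generic fibre.

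By contrast, the paper short-circuits all of this: granted the irreducibility/openness setup, it suffices to exhibit one transverse intersection point for one $\cL$, and choosing $I_n\in\cL$ with $K_0=I_n$ makes the computation trivial ($K=I_n+J$ gives $K^2-K\equiv J$ and $K^{-1}\equiv I_n-J$ to first order, so transversality reduces to $\cL\oplus\cL^\perp=\SS_n$, i.e.\ non-degeneracy of the trace form on a generic $\cL$). I would encourage you either to adopt that one-point shortcut, or, if you keep the general-point computation, to redo the surjectivity argument along the lines sketched above and treat the degenerate locus $K\in\CC I_n$ explicitly.
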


\begin{proof}
By the general irreducibility argument, it suffices to exhibit one transverse intersection point,
for some $\cL$. We will suppose that the identity matrix $I_n$  belongs to $\cL$ and that our intersection point is given by 
$K_0=I_n$. Then $K=I_n+J$ is tangent to the 
locus where $K^2$ is proportional to $K$ modulo $\cL^\perp$ iff $J$ belong to $\cL^\perp $. Since the inverse of 
$K$ is $I_n-J$ at first order, the 
transversality condition will be verified as soon as $\cL\cap \cL^\perp=0$. This means that the standard 
scalar product on $\SS_n$ must remain non degenerate when we restrict it to $\cL$, which is clearly the case in general. 
\end{proof}

Note that this immediately implies that the intersection of  $D(\phi_\cM)$ with $CQ_n^\circ$ has the 
expected codimension. Indeed, if its dimension was bigger than expected, it would have to intersect in 
$\PP(\SS_n)$ any linear space of dimension $m-1$ along a positive dimensional subvariety, and the
transversality property could not hold. 

\medskip
This being established, 
if we could prove that $D(\phi_\cM)$ and $q^{-1}\PP(\cL)$ intersect only in $CQ_n^\circ\simeq\PP(\SS_n)^\circ$, 
we would conclude that $ML_m$ can be computed from the intersection theory of $CQ_n$. Indeed, if $D(\phi_\cM)$ is 
of the expected codimension $m-1$, its fundamental class in the Chow ring of $CQ_n$ is given by the 
Thom-Porteous formula \cite[Chapter 14]{fulton}:
$$[D(\phi_\cM)]=c_{m-1}(\mathcal{O}_{CQ_n}\otimes \SS_n/\mathcal{M}-(\mathcal{O}_{CQ_n}(-H_1)\oplus \mathcal{O}_{CQ_n}(-2H_1))).$$
Here we need to recall that the Chern class of a formal difference $E-F$ between two vector bundles $E$ and $F$  
 is simply $c(E-F)=c(E)/c(F)$, the quotient of their full Chern classes. Since $c(E)$ and $c(F)$ are graded series 
 starting with one in degree zero, this quotient can be expanded formally and $c_k(E-F)$ is the term of degree $k$. 
 In our setting $E$ is trivial, so
we simply get the inverse of the Chern class of $F$, which is the Segre class of $F^\vee$. We would therefore 
deduce that the ML-degree coincides with 
$$ML^{(0)}_m = \int_{CQ_n}[D(\phi_\cM)]H_{n-1}^{N-m+1}=\int_{CQ_n}s_{m-1}(H_1,2H_1)H_{n-1}^{N-m+1}.$$
We will soon see that this identity is utterly wrong -- but can be corrected. Of course the problems 
come from the exceptional divisors.

\section{First exceptional divisor}
When a matrix $K$ has rank one, $K^2$ is always proportional to $K$. 
Thus $D(\phi_\cM)$ always contains $E_1$, which is certainly not of the expected codimension! 
More precisely $K^2=\mathrm{trace}(K)K$, 
so if we replace $K^2$ by $K^2-\mathrm{tr}(K)K$ in $\phi_\cM$, the 
second component factorizes through $\mathcal{O}(-2H_1+E_1)$, which turns out to be  $\mathcal{O}_{CQ_n}(-H_2)$ (see \cite{mmmsv}): indeed, $2\times 2$ minors span the linear system of quadrics vanishing on rank one matrices. Hence a morphism
$$\phi'_\cM: \mathcal{O}_{CQ_n}(-H_1)\oplus \mathcal{O}_{CQ_n}(-H_2) 
\longrightarrow \mathcal{O}_{CQ_n}\otimes \SS_n/\mathcal{M},$$
which coincides with $\phi_\cM$ outside $E_1$. 

\begin{lemma}
For $\cL$ generic,  $D(\phi'_\cM)\cap q^{-1}\PP(\cL) $ does not meet $E_1$.
\end{lemma}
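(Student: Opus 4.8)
The plan is to run the same irreducibility reduction as in the previous lemma: since the condition ``$D(\phi'_\cM)\cap q^{-1}\PP(\cL)$ meets $E_1$'' is a closed condition on $\cL$ in the Grassmannian of $m$-dimensional subspaces of $\SS_n$, it suffices to produce a single $\cL$ for which the intersection avoids $E_1$ entirely, and even a single point of $E_1$ together with a particular $\cL$ for which that point does \emph{not} lie in $D(\phi'_\cM)\cap q^{-1}\PP(\cL)$ will not be enough — rather we must show the full locus over a generic $\cL$ misses $E_1$. Concretely, a point $Q$ of $E_1\cap q^{-1}\PP(\cL)$ corresponds to a rank one symmetric matrix $K=vv^{t}$ (up to scalar) lying over a point of $\PP(\cL)^{\circ}$ via $q$ together with the extra ``complete quadric'' data (a point of $CQ_{n-1}(\cQ^{\vee})$), and I would first unwind what $\phi'_\cM$ restricts to along $E_1$: the first component is still $K=vv^{t}$ viewed in $\SS_n/\cM$, while the second component, $K^2-\mathrm{tr}(K)K$, which vanishes identically on rank one matrices, is replaced after dividing by the equation of $E_1$ by the derivative-type term coming from $\cO(-H_2)$, i.e. the $2\times2$-minor quadric restricted to the infinitesimal neighborhood of $vv^{t}$ inside $\PP(\SS_n)$, read modulo $\cM$.

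The key computation is then: for which $\cL$ does there exist a rank one $K=vv^{t}$ such that simultaneously (i) $q(Q)\in\PP(\cL)$, and (ii) the two bundle sections above become linearly dependent in $\SS_n/\cM$ at $Q$? Condition (i) forces $[\Sigma]=q(Q)$ to be a specific degenerate limit; on $E_1$ the map $q$ sends $Q$ into $D_{n-2}$ (the corank-two locus) — recall from the excerpt that $q$ contracts $E_2$ to $D_{n-2}$, while $E_1$ maps via $q$ with image the comatrix locus, which for a rank-one $K$ is the whole corank-one part, but over a point of $E_1$ the relevant $\Sigma$ is built from the complementary quadric in $\cQ^{\vee}$. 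Since $\cL$ is generic of dimension $m\le 4$, $\PP(\cL)$ is a generic $\PP^{m-1}$, and I would show that the (closure of the) set of $[\Sigma]$ arising as $q(Q)$ for $Q\in E_1$ with the degeneracy condition (ii) holding has dimension strictly less than $N-(m-1)$, so a generic $\PP(\cL)$ of dimension $m-1$ misses it; equivalently, I would bound $\dim\big(q(D(\phi'_\cM)\cap E_1)\big)$ and check it is $<N-m+1$ for $m\le4$. Here one uses that forcing $K$ to have rank one already cuts down dimension by $\binom{n}{2}$, which is $\ge N-m+1$ once $n$ is not too small, with the finitely many small-$n$ cases handled separately (and they can also be checked directly since $ML_m$ is a polynomial in $n$).

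The main obstacle I expect is the bookkeeping along $E_1$: the restriction $\phi'_\cM|_{E_1}$ is \emph{not} the naive restriction of $\phi_\cM$, because of the twist by $E_1$ hidden in $\cO(-H_2)=\cO(-2H_1+E_1)$, so I must compute the second section as a genuine normal-direction derivative of the quadratic map $K\mapsto K^2$ at a rank one matrix and then reduce modulo $\cM=\cL^{\perp}$; getting this identification exactly right — in particular verifying that this derived section does \emph{not} vanish on a locus large enough to create an unexpected-dimensional degeneracy intersecting a generic $\PP(\cL)$ — is the crux. A clean way to organize it is to use the Russian-doll description $E_1\simeq CQ_1(\cU)\times_{G(1,n)}CQ_{n-1}(\cQ^{\vee})=\PP^{n-1}\times_{\cdot}CQ_{n-1}$, trivialize $\cU=\cO(-H_1)|_{E_1}$ over the $\PP^{n-1}$ factor, and track $\SS_n/\cM$ against $S^2\cU\oplus(\cU\otimes\cQ)\oplus S^2\cQ$; then (i) becomes a condition cutting a divisor-type locus in the $CQ_{n-1}(\cQ^{\vee})$ factor while (ii) is a condition on the $\PP^{n-1}$ factor together with one more equation, and a direct dimension count closes the argument. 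Once $E_1$ is excluded, the same circle of ideas (the corank-one and corank-two exceptional divisors) will be used for $E_{n-1}$ and the intermediate $E_i$, as announced in the introduction.
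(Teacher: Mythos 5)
Your outline has the right overall shape---restrict $\phi'_\cM$ to $E_1$ by a normal-derivative computation, then argue that a generic $\cL$ avoids the resulting bad locus---and this is indeed the route the paper takes. But the two steps that carry all the content are left undone, and the dimension count you sketch in their place would not close. First, you correctly identify the computation of $\phi'_\cM|_{E_1}$ as the crux, but you never perform it. The paper's local computation at $K=e_1^2$ shows that at a point of $E_1$ lying over $([K],[Y])$ with $K$ rank one, $Y$ supported on $\ker K$ (so $KY=0$), the image of $\phi'_\cM$ is exactly the pencil $\langle K,Y\rangle$ modulo $\cM$; without this explicit identification the degeneracy condition on $E_1$ cannot be translated into the concrete incidence conditions ($Y\in\cL$ and $\langle K,Y\rangle\cap\cL^\perp\neq 0$) on which everything rests.

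Second, the dimension count you propose is both miscalibrated and circular. Miscalibrated: $q(E_1)$ is the determinant hypersurface $D_{n-1}$, not $D_{n-2}$, so a generic $\PP(\cL)$ with $m\ge 2$ certainly meets it, and $q^{-1}\PP(\cL)\cap E_1$ is nonempty of dimension $m-2$; the heuristic that ``forcing $K$ to have rank one cuts down dimension by $\binom{n}{2}$'' refers to the codimension of $D_1$ in $\PP(\SS_n)$, but inside $CQ_n$ the rank-one locus has been blown up into the divisor $E_1$ of dimension $N-1$, so this codimension buys you nothing. The $m-1$ missing conditions must come entirely from the degeneracy condition $\langle K,Y\rangle\cap\cL^\perp\neq 0$, which your count never exploits. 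Circular: since $\cM=\cL^\perp$ enters the definition of $\phi'_\cM$, you cannot fix $D(\phi'_\cM)\cap E_1$, take its $q$-image, and then intersect with a ``generic'' $\PP(\cL)$ --- the locus you want to avoid moves with $\cL$. The paper resolves this by forming the incidence variety $S$ of triples $([K],[Y],\cL)$ satisfying all the conditions, fibering it over the $(N-1)$-dimensional space of pairs $([K],[Y])$, and checking in two cases (according to whether $\langle K,Y\rangle\cap Y^\perp$ is a line or all of $\langle K,Y\rangle$) that $\dim S\le\dim G-1$, so the projection to the Grassmannian $G$ of $\cL$'s cannot be dominant. Some version of this relative count over $G$ is unavoidable, and it is the piece your proposal is missing.
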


\begin{proof}
We make a local computation on $E_1$, over the rank one matrix $K=e_1^2$, where $e_1$ is the first vector in the canonical basis of $\CC^n$. In other words, the entries of $K$ are $K_{ij}=\delta_{i1}\delta_{j1}$. Locally around $[K]$ in $\PP(\SS_n)$
we get local coordinates by restricting to matrices $X$ with $X_{11}=1$, and  $D_1$ is cut out by the local equations 
$X_{ij}=X_{1i}X_{1j}$, for $i,j\ge 2$. We can therefore describe the blowup along $D_1$ locally around $[K]$ as the set 
of pairs $(X,[Y])$ with $Y=(Y_{ij})_{i,j\ge 2}\in\SS_{n-1}$ a nonzero symmetric matrix such that 
$X_{ij}-X_{1i}X_{1j}=tY_{ij}$ for all $i,j$,  for some scalar $t$. A straightworward computation then shows that 
$$X^2-\mathrm{tr}(X)X=t\begin{pmatrix} -\mathrm{tr}(Y) & 0\\ 0& Y\end{pmatrix} +O(t^2). $$
Since $(t=0)$ is the equation of the exceptional divisor $E_1$ on the blowup, the fact that $t$ factorizes 
confirms that the morphism can be extended from  $\mathcal{O}(-2H_1)$ to $\mathcal{O}(-2H_1+E_1)$, which 
amounts precisely to dividing the above expression by $t$. Then letting $t=0$, we get the morphism $\phi'_\cM$ 
restricted to $E_1$, and we see that its image at $([K],[Y])$ is the pencil $\langle K,Y\rangle$ of matrices generated 
by $K$ and $Y$ (the latter being considered as a matrix in $\SS_n$ by letting $Y_{1i}=0$ for all $i$). 

So our claim amounts to saying that for $\cL$ general there is no pair of non zero matrices 
$(K,Y)\in D_1\times D_{n-1}$, with $KY=0$, such that $Y\in \cL$ and $\langle K,Y\rangle \cap\cL^\perp\ne 0$. 
In order to prove this, it is enough to check that the set $S$ of triples $([K],[Y],\cL)$ verifying
the previous conditions has dimension smaller than $m(N+1-m)$: since this number is the dimension of the Grassmannian
$G$ of $m$-dimensional subspaces of $\SS_n$, the image of the projection of $S$ to $G$ will have to be a proper subset 
of $G$, proving the claim.

In order to estimate the dimension of $S$, we will of course project it to $F_2$, whose 
dimension is $N-1$. So we fix $([K],[Y])$ and we ask  $\cL^\perp$ to  be contained in $Y^\perp$ and to meet 
$\langle K,Y\rangle$ non trivially. 

There are two cases. If $\langle K,Y\rangle\cap Y^\perp$ is a line $D$, we need $D\subset \cL^\perp\subset Y^\perp$, 
so that $\cL$ belongs to a Grassmannian of dimension $(m-1)(N-m)$. Adding the parameters for $([K],[Y])$, we get 
$(m-1)(N-m)+(N-1)=m(N+1-m)-1=\dim (G)-1$, as required. If $\langle K,Y\rangle\subset Y^\perp$, then $\cL^\perp$ 
has to belong to the Grassmannian of dimension $N+1-m$ subspaces of $Y^\perp$, which has dimension $(m-1)(N+1-m)$, 
and meet a plane non trivially, which is a codimension $m-2$ condition. Adding at most $N-2$ parameters for $([K],[Y])$,
since they are not generic in $F_2$, 
we get a total of  $(m-1)(N+1-m)-(m-2)+(N-2)=m(N+1-m)-1=\dim(G)-1$, as required again. 
\end{proof}

\smallskip
Applying the Thom-Porteous formula as above, we would get the refined expectation that the ML-degree  
should coincide with
$$ML^{(1)}_m= \int_{CQ_n}s_{m-1}(H_1,H_2)H_{n-1}^{N-m+1}.$$
This will be true for $m\le 3$,  but not for $m\ge 4$, because the next exceptional divisors also needs 
to be taken into account.

\section{Second exceptional divisor}
Suppose $X=a^2+b^2$ has rank two, so that $a$ and $b$ are independent vectors. To compose symmetric matrices 
we use the standard quadratic  form $q$ on $\CC^n$, in which terms we get 
$$X^2 = q(a)a^2+q(b)b^2+2q(a,b)ab.$$
So $X^2$ is proportional to $X$ iff $q(a)=q(b)$ and $q(a,b)=0$.
If teh restriction of $q$ to the plane $U=\langle a,b\rangle $
is non degenerate, this exactly means that $X$ is (up to scalars) 
the restriction to $U$ of the dual quadratic form. 

\smallskip
Now if we cut out $E_2 $with  $q^{-1}\PP(\cL)$, we get pairs of matrices  
$([X],[Y])\in D_2\times D_{n-2}$ with $[Y]\in\cL$. For $\cL$ general
of dimension $m\le 3$, the intersection is therefore empty since 
$\PP(\cL)\cap D_{n-2}=\emptyset$. For $m=4$, $\PP(\cL)\cap D_{n-2}$ is a collection of $\delta_n$ smooth points, where $\delta_n=\binom{n+1}{3}$ is the degree of $D_{n-2}$. Moreover, for each of these points, by the generality assumption the matrix $Y$ has rank exactly $n-2$, and the quadratic form $q$ is non degenerate on its kernel; so $[X]$ is uniquely determined. 
%

\medskip
We now have enough information to prove our main result.

\section{Proof of the Theorem} Let us summarize our discussion. 
We have seen that the ML-degree counts the number of points in a finite intersection 
inside the open subset $CQ_n^0$ of the variety of complete quadrics. Globally over $CQ_n$, we have expressed 
this intersection as that of the degeneracy locus $D(\phi'_\cM)$ with the pre-image $q^{-1}\PP(\cL)$
of a linear space. If this intersection is finite, then we can compute its degree as an intersection 
number in the variety of complete quadrics, between the class of $D(\phi'_\cM)$ in the Chow ring, and the class
of $q^{-1}\PP(\cL)$. On the one hand, if $D(\phi'_\cM)$ has the expected dimension, then its class is 
given by the Thom-Porteous formula, which yields a Segre class $s_{m-1}(H_1,H_2)$. On the other hand, the class 
of  $q^{-1}\PP(\cL)$ is simply a power of the pull-back by $q$ of the hyperplane class. So the relevant 
intersection  number on $CQ_n$ can be computed. 
But we have to be careful about the intersection points that 
may belong to the exceptional divisors, which should not be counted in the ML-degree. This yields a 
relation 
$$\int_{CQ_n}s_{m-1}(H_1,H_2)H_{n-1}^{N-m+1}=ML_m+\Delta_m^{(1)}+\cdots +\Delta_m^{(n-1)},$$
where $\Delta_m^{(i)}$ is the contribution of $E_i$ to our intersection problem. We have seen that $\Delta_m^{(1)}=0$.
We claim that for dimensional reasons,
$$\Delta_m^{(i)}=0 \quad\mathrm{for}\quad m\le  \binom{i+1}{2}.$$
Indeed, $\PP(\cL)$ is a generic linear subspace of dimension $m-1$ in $\PP(\SS_n)$, so it does not meet
the degeneracy locus $D_{n-i}$, when $m-1$ is smaller than the codimension $\binom{i+1}{2}$ of the latter. Since 
$D_{n-i}=q(E_i)$, this implies our claim.

\medskip\noindent $\mathbf{m=2}$. By the previous claims the exceptional divisors do not contribute to our intersection number, and we directly get that 
$$ML_2=\int_{CQ_n}(H_1+H_2)H_{n-1}^{N-1}=(n-1)+(n-2)=2n-3.$$
Indeed, $H_{n-1}^{N-1}$ is represented by a projective line of
matrices, and $H_1$ (resp. $H_2$) by a linear relation between 
the maximal (resp. submaximal)  minors of these matrices, which of course have degree $n-1$ (resp. $n-2$). 

\medskip\noindent $\mathbf{m=3}$. Here again the exceptional divisors do not contribute, hence
$$ML_3=\int_{CQ_n}s_{2}(H_1,H_2)H_2^{N-2}=
\int_{CQ_n}(H_1^2+H_1H_2+H_2^2)H_{n-1}^{N-2}.$$
By the previous interpretation this yields the expected result:
$$ML_3=(n-1)^2+(n-1)(n-2)+(n-2)^2=3n^2-9n+ 7.$$  

\medskip\noindent $\mathbf{m=4}$. Here $E_2$ has to be taken into account, and we have 
$$ML_4=\int_{CQ_n}s_{3}(H_1,H_2)H_{n-1}^{N-3}-\Delta_4^{(2)}.$$
In order to compute these numbers we argue as follows.
As before $H_{n-1}^{N-3}$ is represented by a generic $\PP^3$ 
of symmetric matrices,
and its intersection with $H_1^aH_2^{3-a}$ is represented by a complete intersection of $a$ hypersurfaces of degree $n-1$ and 
$3-a$ hypersurfaces of degree $n-2$. By Bertini these hypersurfaces intersect in general transversely outside the base loci of the corresponding linear systems,  which we can avoid since the base locus of $H_2$ has codimension six; with a 
caveat  when $a=3$, 
in which case we only have $H_1$ whose base locus has only 
codimension 
three and cannot be avoided, and then we need  
to substract the degree $\delta_n$ of the variety of corank two matrices (see \cite[section 2.2]{uhler}).  
This yields
$$\int_{CQ_n}H_2^3H_{n-1}^{N-3}=(n-2)^3, \qquad \int_{CQ_n}H_1H_2^2H_{n-1}^{N-3}=(n-1)(n-2)^2,$$
$$\int_{CQ_n}H_1^2H_2H_{n-1}^{N-3}=(n-1)^2(n-2), \qquad \int_{CQ_n}H_1^3H_{n-1}^{N-3}=(n-1)^3-\delta_n.$$
Finally, we have seen in the previous section that 
$$\Delta_4^{(2)}=\delta_n=\binom{n+1}{3}.$$
Putting all this together we finally conclude that 
$$ML_4 =(n-2)^3+(n-1)(n-2)^2+(n-1)^2(n-2)+(n-1)^3-2
\binom{n+1}{3},$$
which is exactly the conjectured formula.

\section{Some questions} 
How could we go beyond the results of this note? 
\begin{enumerate}
\item For $m\ge 5$ our degeneracy locus always contains the $\cC_2\subset E_2$, whose codimension (three)
is smaller than the expected codimension $m-1$. This is a serious problem in order to compute $ML_m$ 
from intersection theory on complete quadrics. 
Excess intersection theory deals with this sort of situations and might allow to overcome the problem, at least 
up to $m=6$, after which $E_3$ will also enter the show. 

More directly, one could try to 
blow-up $CQ_n$ along $\cC_2$ and try to define a new morphism 
$\phi''_\cM$ on the blow-up, whose degeneracy locus could hopefully be of the correct codimension. 
\item  By the same
argument as for$E_2$, each exceptional divisor $E_s$  contains a component 
$\cC_s$ of the degeneracy locus, of codimension $\binom{s+1}{2}-1$. Moreover this component should contribute 
for $m>\binom{s+1}{2}$. Is there a simple formal argument to prove that this contribution is polynomial 
in $n$? 
\item More generally, is there any formal reason to expect that, like for generic linear concentration models, 
the ML-degrees of generic linear covariance models should be polynomial in $n$? This would again be a very 
remarkable phenomenon, but our concrete evidence for that is still rather limited. 
\end{enumerate}

\bibliographystyle{alpha}

\bigskip
\noindent
\textsc{Institut de Mathématiques de Toulouse, UMR 5219,  Universit\'e Paul Sabatier, F-31062 Toulouse Cedex 9, France}

\noindent
\textit{Email address}: \texttt{manivel@math.cnrs.fr}

\end{document}